\documentclass[11pt]{article}

\usepackage[margin=1in]{geometry}
\usepackage{amsmath,amssymb,amsthm,mathtools}
\usepackage{microtype}
\usepackage{enumitem}
\usepackage[hidelinks]{hyperref}

\allowdisplaybreaks
\setlist[enumerate]{leftmargin=*,itemsep=2pt,topsep=4pt}

\newtheorem{theorem}{Theorem}[section]

\newtheorem{lemma}[theorem]{Lemma}
\newtheorem{corollary}[theorem]{Corollary}
\theoremstyle{definition}
\newtheorem{definition}[theorem]{Definition}
\theoremstyle{remark}
\newtheorem{remark}[theorem]{Remark}

\newcommand{\R}{\mathbb R}
\newcommand{\rank}{\operatorname{rank}}
\newcommand{\dotcupunion}{\mathbin{\dot\cup}}
\newcommand{\1}{\mathbf 1}

\title{Incidence Rank and Bounded Defect\\
for Linear Hypergraphs with Cograph Line Graphs}
\author{Mahesh Ramani}
\date{16 August 2026}

\begin{document}
\maketitle

\begin{abstract}
Let \(N(H)\) be the edge--vertex incidence matrix of a finite linear
\(s\)-uniform hypergraph whose line graph is a cograph, and define
\[
 R_s(H)=(s+1)\operatorname{rank}_{\mathbb R}N(H)-s|E(H)|.
\]
The inequality \(R_s(H)\ge0\) holds, with equality characterized by affine
planes of order \(s\) on every row-dependent component.  Under the corresponding
finite-net completion hypothesis, connected systems with bounded \(R_s\) admit
an explicit classification.

If \(F\) is connected, \(R_s(F)=d\le s-1\), the completion hypothesis
holds through deficiency \(d\), and \(s>(d-1)^2\), then either \(N(F)\)
has full row rank and \(|E(F)|=d\), or \(F\) lies in a unique affine plane
of order \(s\).  In the latter case, for some \(0\le h\le d\), the system
contains every line in \(s+1-h\) parallel classes and exactly \(d-h\)
additional lines from the remaining \(h\) classes.  Conversely, each
such system has defect \(d\).

Bruck's theorem gives the classification for every fixed \(d\) and all
sufficiently large \(s\), while Metsch's completion theorem covers
\(d=o(s^{1/3})\).  As a global consequence, if the total defect is at
most \(D\) in the completion range, deleting at most \(D\) edges leaves
a vertex-disjoint union of finite nets and affine planes.  The associated
column defect and row leverage are determined explicitly as well.
\end{abstract}

\section{Introduction}\label{sec:intro}

For a linear \(s\)-uniform hypergraph \(H\), the identity
\[
 N(H)N(H)^{\mathsf T}=sI+A(L(H))
\]
relates the real incidence rank to the spectrum of the line graph.  When
\(L(H)\) is a cograph, this relation yields
\[
 (s+1)\operatorname{rank}_{\mathbb R}N(H)\ge s|E(H)|.
\]
The row-dependent equality components are affine planes of order \(s\),
equivalently Steiner \(2\)-designs \(S(2,s,s^2)\).  Write \(P_4^s\) for a
linear \(s\)-uniform hypergraph on four edges whose line graph is an induced
four-vertex path.  The rank inequality then implies
\[
 |E(H)|\le\frac{s+1}{s}|V(H)|
\]
for every linear \(P_4^s\)-free \(s\)-graph, the extremal problem studied
by Adak and Verma \cite{AdakVerma2026}.

The central parameter is the integer slack
\[
 R_s(H)=(s+1)\operatorname{rank}_{\mathbb R}N(H)-s|E(H)|.
\]
The cograph join decomposition gives an exact recursion for \(R_s\).  A
join node of defect at most \(d\) contains at least \(s+1-d\) minimal
balanced co-components; these form parallel classes on a common
\(s^2\)-point set and hence a net of deficiency at most \(d\).  Classical
completion theorems then constrain the remaining edges.

Under the corresponding net-completion hypothesis, a connected
row-dependent system with \(R_s=d\) lies in a unique affine plane and
contains \(s+1-h\) complete directions together with \(d-h\) lines from
the remaining \(h\) directions, for some \(0\le h\le d\).  The only
alternative is full row rank, in which case the system has exactly \(d\)
edges.  Bruck's theorem supplies the completion hypothesis whenever
\[
 s>\frac12(d-1)^4+(d-1)^3+(d-1)^2+\frac34(d-1),
\]
and Metsch's theorem whenever
\[
 3s>8d^3-18d^2+8d+4.
\]
These bounds yield the component classification, a bounded edit result,
and explicit formulas for column defect and row leverage.

\section{Cographs and the incidence-rank inequality}\label{sec:rank}

For an \(s\)-uniform hypergraph \(H\), let \(N(H)\) be the edge--vertex
incidence matrix, with rows indexed by \(E(H)\), after isolated vertices
are removed.  Define
\[
 R_s(H)=(s+1)\operatorname{rank}_{\mathbb R}N(H)-s|E(H)|
\]
and
\[
 C(H)=|V^+(H)|-\operatorname{rank}_{\mathbb R}N(H),
\]
where \(V^+(H)\) is the set of nonisolated vertices.  The quantity \(R_s(H)\) is the \emph{incidence-rank defect}.  The identity
\begin{equation}\label{eq:rank-column-split}
 (s+1)|V^+(H)|-s|E(H)|=R_s(H)+(s+1)C(H).
\end{equation}

Let \(N(H)^\dagger\) denote the Moore--Penrose pseudoinverse.  If \(P_H=N(H)N(H)^\dagger\) is the orthogonal projector onto the column
space of \(N(H)\subseteq\mathbb R^{E(H)}\), the row leverage of
\(e\in E(H)\) is
\[
 \ell_H(e)=(P_H)_{ee}.
\]
Throughout, \(\1\) denotes an all-ones vector of the dimension determined by context.

The line graph \(L(H)\) has vertex set \(E(H)\), with two vertices
adjacent when the corresponding hyperedges intersect.  A graph is a
\emph{cograph} if it has no induced path on four vertices.  Seinsche's
decomposition theorem states that every cograph with at least two
vertices is disconnected or has disconnected complement
\cite{Seinsche}.

\begin{lemma}[Cograph decomposition]\label{lem:cograph-decomposition}
If a graph \(G\) with at least two vertices has no induced \(P_4\),
then \(G\) is disconnected or \(\overline G\) is disconnected.
\end{lemma}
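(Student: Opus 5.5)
The plan is to prove Seinsche's theorem directly by induction on \(|V(G)|\), using only that the class of \(P_4\)-free graphs is closed under taking induced subgraphs and under complementation (the complement of an induced \(P_4\) is again an induced \(P_4\)). For \(|V(G)|=2\) the claim is immediate: either the two vertices are nonadjacent, so \(G\) is disconnected, or they are adjacent, so \(\overline G\) is disconnected. For the inductive step we assume \(|V(G)|\ge3\) and, for contradiction, that both \(G\) and \(\overline G\) are connected.

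Pick a vertex \(v\) and set \(G'=G-v\). Then \(G'\) is \(P_4\)-free with at least two vertices, so by induction \(G'\) or \(\overline{G'}\) is disconnected. Since both hypotheses and conclusion are symmetric under complementation, we may assume \(G'\) is disconnected; otherwise we replace \(G\) by \(\overline G\). Let \(C_1,\dots,C_k\), \(k\ge2\), be the components of \(G'\). Because \(\overline G\) is connected, \(v\) is not adjacent in \(G\) to every other vertex, so some \(x\ne v\) is a nonneighbor of \(v\); say \(x\in C_1\). Because \(G\) is connected, \(v\) has a neighbor in every component \(C_i\).

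Inside \(C_1\) we use connectivity. The component \(C_1\) contains a neighbor of \(v\) and also the nonneighbor \(x\), so along a path in \(C_1\) there are adjacent vertices \(a,b\in C_1\) with \(a\sim v\) and \(b\not\sim v\). Now choose a neighbor \(c\) of \(v\) in \(C_2\). Then \(b\!-\!a\!-\!v\!-\!c\) is an induced \(P_4\). To check this, note that \(b\not\sim v\) by choice, and that \(b\not\sim c\) and \(a\not\sim c\) because \(c\) lies in a different component of \(G'\). This contradicts the hypothesis, which completes the induction.

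The main obstacle is organizing the step so that the symmetry reduction is legitimate. It uses that \(\overline{P_4}\cong P_4\), so \(\overline G\) is \(P_4\)-free, and that the assumption "both \(G\) and \(\overline G\) connected" is itself symmetric. The rest is the short path-finding argument above. No earlier results of the paper are needed; alternatively, one could simply cite \cite{Seinsche}.
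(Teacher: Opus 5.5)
Your proof is correct, and it takes a genuinely different route from the paper. You argue by induction on \(|V(G)|\). You delete a vertex \(v\) and apply the inductive hypothesis to \(G-v\). You then use \(\overline{P_4}\cong P_4\) together with \(\overline{G-v}=\overline G-v\) to reduce to the case where \(G-v\) is disconnected. From there, a short path argument inside one component of \(G-v\) yields the induced path \(b\,a\,v\,c\).

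The paper instead works directly in \(G\), with no induction and no complementation. Assuming \(G\) is connected, it proceeds in five steps:
\begin{enumerate}
\item It fixes a vertex \(x\) and notes that \(P_4\)-freeness puts every vertex within distance two of \(x\).
\item It shows each neighbor of \(x\) is complete or anticomplete to each component \(C\) of the second distance layer.
\item It proves that the sets \(A_C\) of neighbors complete to \(C\) are linearly ordered by inclusion.
\item It chooses a minimal \(A=A_C\).
\item It checks that \(A\) is complete to \(V(G)\setminus A\). When the second layer is empty, \(x\) itself is universal.
\end{enumerate}

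Your argument is shorter and is the standard textbook proof. However, it is a proof by contradiction that relies on heredity and on the self-complementarity of \(P_4\). The paper's argument is longer but constructive and never leaves \(G\). It exhibits an explicit nonempty set, either \(A\subseteq N_G(x)\) or \(\{x\}\), that is joined to every other vertex. This gives a concrete disconnection of \(\overline G\), and along the way it records that connected cographs have diameter at most two.
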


\begin{proof}
Assume that \(G\) is connected.  Fix \(x\in V(G)\), let
\(L_1=N_G(x)\), and let \(L_2=V(G)\setminus(L_1\cup\{x\})\).  A
shortest path in \(G\) is induced, so \(P_4\)-freeness implies that
every vertex is at distance at most two from \(x\).  If \(L_2\) is
empty, then \(x\) is isolated in \(\overline G\), and we are done.

Let \(C\) be a connected component of \(G[L_2]\).  It has a neighbor
in \(L_1\).  Every \(y\in L_1\) is either complete or anticomplete to
\(C\).  Indeed, if its adjacency changes along an edge \(uv\) of a
path in \(C\), with \(yu\in E(G)\) and \(yv\notin E(G)\), then
\(x,y,u,v\) induce a \(P_4\).  Let
\[
       A_C=\{y\in L_1:y\text{ is complete to }C\};
\]
this set is nonempty.

The sets \(A_C\), over the components of \(G[L_2]\), are linearly
ordered by inclusion.  Otherwise choose distinct components \(C,C'\),
vertices
\[
 y\in A_C\setminus A_{C'},\qquad
 y'\in A_{C'}\setminus A_C,
\]
and points \(z\in C\), \(z'\in C'\).  If \(yy'\in E(G)\), then
\(z,y,y',z'\) induce a \(P_4\); if \(yy'\notin E(G)\), then
\(z,y,x,y'\) do.

Choose a component \(C\) for which \(A=A_C\) is minimal.  Then \(A\)
is complete to every component of \(G[L_2]\).  It is also complete to
\(L_1\setminus A\): if \(a\in A\), \(y\in L_1\setminus A\), and
\(ay\notin E(G)\), choose \(z\in C\); the vertices \(z,a,x,y\) induce
a \(P_4\).  Finally, \(A\) is complete to \(x\).  Hence every edge
between \(A\) and \(V(G)\setminus A\) is present.  Both parts are
nonempty, so \(\overline G\) is disconnected.
\end{proof}

\begin{lemma}\label{lem:path-cograph}
A linear \(s\)-uniform hypergraph is \(P_4^s\)-free if and only if its
line graph is a cograph.
\end{lemma}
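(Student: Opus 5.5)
The argument amounts to unwinding the definition of \(P_4^s\): it is a linear \(s\)-uniform hypergraph on four edges whose line graph is an induced path \(P_4\). The point to settle is what "\(H\) contains \(P_4^s\)" means. We read it as: \(H\) contains four edges whose induced line graph is \(P_4\), that is, a subhypergraph consisting of four edges of \(H\) that is isomorphic to some member of the \(P_4^s\) family. With this reading, both directions reduce to the observation that the line graph of a subhypergraph on an edge subset \(F\subseteq E(H)\) equals the induced subgraph \(L(H)[F]\). Adjacency in \(L\) depends only on whether the two hyperedges intersect, and this is a property of the pair alone, not of the rest of \(H\).

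For the forward direction, suppose \(L(H)\) is not a cograph. Then there are edges \(e_1,e_2,e_3,e_4\in E(H)\) inducing a path \(e_1e_2e_3e_4\) in \(L(H)\). The subhypergraph \(F\) with edge set \(\{e_1,\dots,e_4\}\), on the union of their vertices, is \(s\)-uniform. It is linear because it is a subhypergraph of a linear hypergraph. Its line graph is \(L(H)[\{e_1,\dots,e_4\}]\cong P_4\), so \(F\) is a copy of \(P_4^s\) in \(H\).

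For the converse, suppose \(H\) contains a copy of \(P_4^s\) on edges \(e_1,\dots,e_4\). Its line graph is an induced \(P_4\). By the observation above, this graph is exactly \(L(H)[\{e_1,\dots,e_4\}]\), so \(L(H)\) contains an induced \(P_4\) and is not a cograph.

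There is one potential obstacle. If \(P_4^s\) were intended as a single specific configuration, say one where consecutive edges meet in a single point and the three intersection points are distinct, then we would have to check that any induced \(P_4\) in \(L(H)\) yields that configuration. This holds automatically. Linearity forces consecutive edges to meet in exactly one vertex. The vertices \(e_1\cap e_2\) and \(e_2\cap e_3\) must be distinct, since otherwise \(e_1\cap e_3\neq\emptyset\), contradicting non-adjacency of \(e_1\) and \(e_3\) in the path; the same argument applies to \(e_2\cap e_3\) and \(e_3\cap e_4\). The non-consecutive pairs are disjoint because they are non-adjacent in the induced path. Hence the configuration is determined up to isomorphism, and the lemma holds under either reading of \(P_4^s\).
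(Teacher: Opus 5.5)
Your proof is correct and follows the paper's argument. A copy of \(P_4^s\) restricts to an induced \(P_4\) in \(L(H)\). Conversely, linearity together with the disjointness of nonconsecutive edges forces the four edges of an induced \(P_4\) to form \(P_4^s\); the one case you leave unstated, that \(e_1\cap e_2\ne e_3\cap e_4\), follows at once from \(e_1\cap e_4=\varnothing\), which you already note.
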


\begin{proof}
A copy of \(P_4^s\) gives an induced \(P_4\) in the line graph.
Conversely, suppose that four hyperedges induce a graph path.  Consecutive
edges intersect and nonconsecutive edges are disjoint.  By linearity the
three consecutive intersections are single vertices.  No two of these
vertices coincide, since that would make a nonconsecutive pair intersect.
The four edges therefore form \(P_4^s\).
\end{proof}

Identify an edge \(e\) with its incidence vector
\(x_e\in\{0,1\}^{V(H)}\).  Uniformity and linearity give
\[
x_e^{\mathsf T}x_f=
\begin{cases}
s,&e=f,\\
1,&ef\in E(L(H)),\\
0,&ef\notin E(L(H)).
\end{cases}
\]
Equivalently,
\[
             N(H)N(H)^{\mathsf T}=A(L(H))+sI.
\]

\begin{lemma}[No universal transversal]\label{lem:no-transversal}
Let \(\mathcal S\) be an \(S(2,s,s^2)\) contained in a linear
hypergraph \(H\).  If \(f\in E(H)\setminus E(\mathcal S)\), then \(f\)
cannot meet every block of \(\mathcal S\).
\end{lemma}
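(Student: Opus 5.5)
The argument is a direct counting one and uses only linearity of \(H\) together with the parameters of an affine plane of order \(s\). Suppose, for contradiction, that \(f\in E(H)\setminus E(\mathcal S)\) meets every block of \(\mathcal S\). Recall that \(\mathcal S\) has \(s^2\) points and \(s^2+s\) blocks. Each point lies on exactly \(s+1\) blocks, and any two distinct points lie on exactly one common block.

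First, we locate the points of \(f\) inside \(\mathcal S\). Write \(T=f\cap V(\mathcal S)\) and \(t=|T|\le s\). Every block of \(\mathcal S\) that meets \(f\) does so at points of \(T\), since blocks contain only points of \(V(\mathcal S)\). By linearity of \(H\), the edge \(f\) shares at most one vertex with any block of \(\mathcal S\), because \(f\) is not itself a block. Hence each block meets \(T\) in at most one point.

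Next, we count incidences. Each point of \(T\) lies on exactly \(s+1\) blocks. Since no block contains two points of \(T\), these families of blocks are pairwise disjoint. The number of blocks meeting \(f\) is therefore exactly \(t(s+1)\). The assumption that every block meets \(f\) then gives
\[
 t(s+1)=s^2+s=s(s+1),
\]
so \(t=s\) and \(f=T\subseteq V(\mathcal S)\).

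Finally, we derive the contradiction. For \(s\ge2\), the set \(f\) contains two distinct points \(u,v\). In \(\mathcal S\) there is a unique block \(B\) through \(u\) and \(v\), and \(B\ne f\) because \(f\notin E(\mathcal S)\). Then \(|B\cap f|\ge2\), which violates linearity. (If \(s=1\), there are no pairs in \(f\), so this step fails; in that case we may restrict to \(s\ge2\), or observe that the statement is only applied for \(s\ge2\).)

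The only delicate point is the disjointness used in the count. It is what turns "meets every block" into the equality \(t=s\), and it rests entirely on linearity preventing \(f\) from sharing two points with a single block. Everything else is standard parameter bookkeeping for \(S(2,s,s^2)\).
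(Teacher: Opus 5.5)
Your proof is correct and takes essentially the same approach as the paper: linearity makes the families of blocks through the points of \(f\cap V(\mathcal S)\) pairwise disjoint, so covering all \(s(s+1)\) blocks forces \(f\subseteq V(\mathcal S)\), and then the block through two points of \(f\) meets \(f\) twice. Your caveat about needing \(s\ge2\) is appropriate, since the paper's final step also uses two distinct points of \(f\).
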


\begin{proof}
The design has \(s(s+1)\) blocks, and each point lies in \(s+1\)
blocks.  If \(f\) met every block, linearity would force each block to
meet \(f\) once.  Counting these incidences gives
\[
s(s+1)=\sum_{v\in f\cap V(\mathcal S)}(s+1)\le s(s+1).
\]
Thus every point of \(f\) belongs to \(V(\mathcal S)\).  Two points of
\(f\) lie in a unique block of the design, and that block would meet
\(f\) twice.  Linearity would force the block to equal \(f\), contrary
to the choice of \(f\).
\end{proof}

\subsection{The join-node rank formula}

Let \(F\) be a nonempty edge family whose line graph \(G\) is a
connected nontrivial cograph.  Let
\[
             F=F_1\dotcupunion\cdots\dotcupunion F_p,\qquad p\ge2,
\]
where the \(F_i\) are the connected components of \(\overline G\);
Lemma \ref{lem:cograph-decomposition} gives \(p\ge2\).
Thus every edge in \(F_i\) meets every edge in \(F_j\) when \(i\ne j\).
Put
\[
\begin{split}
m_i&=|F_i|,\\
\mathcal R_i&=\operatorname{span}\{x_e:e\in F_i\},\\
\rho_i&=\dim\mathcal R_i,\\
\mathcal D_i&=\operatorname{span}\{x_e-x_f:e,f\in F_i\}.
\end{split}
\]

\begin{lemma}[Orthogonal join decomposition]\label{lem:join-rank}
The spaces \(\mathcal D_1,\ldots,\mathcal D_p\) are mutually
orthogonal, and
\[
                         \dim\mathcal D_i=\rho_i-1.
\]
Let \(\mathcal D=\mathcal D_1\oplus\cdots\oplus\mathcal D_p\).
For \(e\in F_i\), let \(c_i\) be the orthogonal projection of \(x_e\)
onto \(\mathcal D^\perp\).  This is independent of \(e\), and
\[
        \1^{\mathsf T}c_i=s,\qquad
        c_i^{\mathsf T}c_j=1\quad(i\ne j).
\]
If
\[
             z=\bigl|\{i:\|c_i\|^2=1\}\bigr|,
\]
then
\[
\boxed{\rank N(F)=\sum_{i=1}^p\rho_i-\max\{z-1,0\}.}
\]
Moreover, all \(c_i\) of norm one are equal.
\end{lemma}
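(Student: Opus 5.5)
The plan is to use the Gram identity \(x_e^{\mathsf T}x_f\in\{s,1,0\}\) together with the join structure: edges in different \(F_i\) always meet, so \(x_e^{\mathsf T}x_f=1\) whenever \(e\in F_i\), \(f\in F_j\), \(i\ne j\).

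\emph{Orthogonality of the \(\mathcal D_i\).} For \(e,f\in F_i\) and \(g\in F_j\) with \(j\ne i\) we have \((x_e-x_f)^{\mathsf T}x_g=1-1=0\). Hence \(\mathcal D_i\perp\mathcal R_j\supseteq\mathcal D_j\).

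\emph{Dimension of \(\mathcal D_i\).} Since \(\mathcal R_i=\mathcal D_i+\operatorname{span}\{x_e\}\) for any fixed \(e\in F_i\), we get \(\dim\mathcal D_i\ge\rho_i-1\). Equality requires \(x_e\notin\mathcal D_i\). This follows because \(\1^{\mathsf T}x_e=s\ne0\) while \(\1^{\mathsf T}\) vanishes on \(\mathcal D_i\).

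\emph{The vectors \(c_i\).} For \(e,f\in F_i\) the difference \(x_e-x_f\) lies in \(\mathcal D\), so the projections of \(x_e\) and \(x_f\) onto \(\mathcal D^\perp\) agree; this shows \(c_i\) is well defined. To compute \(\1^{\mathsf T}c_i\), note that \(\1\perp\mathcal D\), so \(\1\in\mathcal D^\perp\) and \(\1^{\mathsf T}c_i=\1^{\mathsf T}x_e=s\). For \(i\ne j\), write \(x_e=c_i+d\) with \(d\in\mathcal D\), and decompose \(d=\sum_k d_k\) with \(d_k\in\mathcal D_k\). Take \(g\in F_j\). Then \(x_e^{\mathsf T}x_g=1\). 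Also \(d_k^{\mathsf T}x_g=0\) for \(k\ne j\), since \(\mathcal D_k\perp\mathcal R_j\). Hence \(1=c_i^{\mathsf T}x_g+d_j^{\mathsf T}x_g\). Projecting \(x_g=c_j+d'\), we get \(c_i^{\mathsf T}x_g=c_i^{\mathsf T}c_j\) because \(c_i\perp\mathcal D\).

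It remains to show \(d_j^{\mathsf T}x_g=0\). I would first argue that \(d_j=0\). The component \(d_j\) is the projection of \(x_e\) onto \(\mathcal D_j\), and \(x_e\perp\mathcal D_j\) by the first step (\(x_e\in\mathcal R_i\perp\mathcal D_j\)). So \(d_j=0\), and therefore \(c_i^{\mathsf T}c_j=1\). By the same argument, \(x_e=c_i+d_i\) with \(d_i\in\mathcal D_i\).

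\emph{Rank formula.} We have \(\operatorname{span}\{x_e:e\in F\}=\mathcal D\oplus\operatorname{span}\{c_1,\dots,c_p\}\). Its dimension is therefore \(\sum_i(\rho_i-1)+\dim\operatorname{span}\{c_i\}\), so it suffices to show \(\dim\operatorname{span}\{c_i\}=p-\max\{z-1,0\}\). The Gram matrix of the \(c_i\) is \(J+\operatorname{diag}(\|c_i\|^2-1)\). Cauchy–Schwarz gives \(1=c_i^{\mathsf T}c_j\le\|c_i\|\|c_j\|\).

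The main obstacle is to show \(\|c_i\|^2\ge1\) for every \(i\). I would get this from \(s=\1^{\mathsf T}c_i\le\|\1\|\,\|c_i\|\) only if \(\|\1\|\) were controlled, which it is not directly. Instead I would use the Gram identity once more. The Gram matrix of \(\{x_e\}\) is positive semidefinite, and its \(F_i\)-diagonal block equals \(sI+A(L(F_i))\); this block relates to \(\|c_i\|^2\) via \(x_e=c_i+d_i\). More simply, for \(i\ne j\) we have \(1=c_i^{\mathsf T}c_j\le\|c_i\|\|c_j\|\). Combined with the PSD Gram matrix \(J+\Delta\), any negative entry \(\delta_i=\|c_i\|^2-1<0\) should be excluded by testing the vector \(u=e_i-e_j\) for \(p\ge2\): it gives \(\delta_i+\delta_j\ge0\). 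Refining this with \(u=e_i-\tfrac12(e_j+e_k)\) or similar test vectors, I expect to show all \(\delta_i\ge0\), perhaps with at most one negative entry that is handled separately.

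Granting \(\Delta\ge0\), the matrix \(J+\Delta\) has rank \(p-\max\{z-1,0\}\), where \(z\) counts the zero entries of \(\Delta\). The unit-norm \(c_i\) then have pairwise inner product \(1\), so equality in Cauchy–Schwarz forces them to be equal, which gives the final assertion.
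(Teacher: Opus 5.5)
Your preliminary steps are correct and follow the paper's structure: orthogonality of the $\mathcal D_i$, $\dim\mathcal D_i=\rho_i-1$, well-definedness of $c_i$, $\1^{\mathsf T}c_i=s$, $c_i^{\mathsf T}c_j=1$, the splitting of the row space as $\mathcal D\oplus\operatorname{span}\{c_i\}$, and the Cauchy--Schwarz argument that unit-norm $c_i$ coincide. The gap is in the final step. There you reduce to the rank of $J+\Delta$ and then ``grant'' $\Delta\ge0$, i.e.\ $\|c_i\|^2\ge1$ for all $i$. That claim is false. Take $s=2$ and the graph with edges $e=uv$, $f_1=ua$, $f_2=vc$, $f_3=ub$. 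Its line graph is a paw, which is a cograph, and the co-components are $F_1=\{e\}$ and $F_2=\{f_1,f_2,f_3\}$. Then $c_2=\tfrac27x_{f_1}+\tfrac37x_{f_2}+\tfrac27x_{f_3}$: this is an affine combination with $c_2^{\mathsf T}x_{f_k}=\tfrac67$ for every $k$, hence orthogonal to $\mathcal D=\mathcal D_2$. So $\|c_2\|^2=\tfrac67<1$. Positive semidefiniteness of $J+\Delta$ plus Cauchy--Schwarz also cannot handle the one allowed negative entry. For instance, $\Delta=\operatorname{diag}(-\tfrac12,1)$ gives a singular positive semidefinite $J+\Delta$ with $z=0$, which would violate the formula. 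Your test vectors only yield $\delta_i+\delta_j\ge0$ and cannot exclude this case.

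The missing idea is to use $\1^{\mathsf T}c_i=s$, which you already proved, directly in the rank computation instead of trying to turn it into a norm bound. Suppose $\sum_ia_ic_i=0$. Summing coordinates gives $\sum_ia_i=0$. Pairing with $c_j$ then gives $a_j(\|c_j\|^2-1)+\sum_ia_i=0$, hence $a_j(\|c_j\|^2-1)=0$, with no sign condition needed on $\|c_j\|^2-1$. So every relation is supported on the norm-one indices. There all $c_j$ equal one vector $c\ne0$, so the relation space is $\{a:\ a_j=0\text{ unless }\|c_j\|^2=1,\ \sum_ja_j=0\}$, of dimension $\max\{z-1,0\}$. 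In your Gram language: a kernel vector $a$ of $J+\Delta$ is a relation among the $c_i$, so $\1^{\mathsf T}a=0$ and therefore $\Delta a=0$. In the abstract example above, singularity would force $c_2=2c_1$, contradicting $\1^{\mathsf T}c_1=\1^{\mathsf T}c_2=s$. This is exactly how the paper closes the argument.
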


\begin{proof}
For \(i\ne j\), every row from \(F_i\) has inner product one with every
row from \(F_j\).  Hence
\[
             (x_e-x_f)^{\mathsf T}x_g=0
             \qquad(e,f\in F_i,\ g\in F_j).
\]
Thus \(\mathcal D_i\perp\mathcal R_j\), and the \(\mathcal D_i\) are
mutually orthogonal.

Every vector in \(\mathcal D_i\) has coordinate sum zero.  Conversely,
if
\[
             y=\sum_{e\in F_i}\alpha_e x_e\in\mathcal R_i,
             \qquad \1^{\mathsf T}y=0,
\]
then \(\sum_e\alpha_e=0\).  Fixing \(f\in F_i\), we obtain
\[
             y=\sum_{e\ne f}\alpha_e(x_e-x_f)\in\mathcal D_i.
\]
Thus \(\mathcal D_i\) is the kernel in \(\mathcal R_i\) of the
nonzero coordinate-sum functional, proving \(\dim\mathcal D_i=\rho_i-1\).

Rows in \(F_i\) differ by elements of \(\mathcal D_i\), so they have a
common projection \(c_i\).  Since every row in \(\mathcal R_i\) is
orthogonal to \(\mathcal D_j\) for \(j\ne i\),
\[
                         x_e-c_i\in\mathcal D_i.
\]
The coordinate-sum assertion and the stated inner products now follow from the incidence inner-product formula.

The total row space is the orthogonal direct sum
\[
\mathcal D\oplus\operatorname{span}\{c_1,\ldots,c_p\}.
\]
Suppose
\[
               \sum_{i=1}^p a_i c_i=0,\qquad S=\sum_i a_i.
\]
Taking coordinate sums gives \(S=0\).  Taking inner product with \(c_j\)
and using \(c_i^{\mathsf T}c_j=1\) for \(i\ne j\) gives
\[
              0=a_j(\|c_j\|^2-1)+S.
\]
Hence \(a_j=0\) unless \(\|c_j\|^2=1\).  If two \(c_i,c_j\) have norm
one, then \(c_i^{\mathsf T}c_j=1\), and therefore \(c_i=c_j\).
The relation space among the \(c_i\) has dimension \(z-1\) when
\(z\ge1\), and dimension zero when \(z=0\).  Combining this with
the dimension formula for the \(\mathcal D_i\) and this orthogonal decomposition proves the stated rank formula.
\end{proof}

A co-component \(F_i\) with \(\|c_i\|^2=1\) will be called
\emph{balanced}.

\begin{lemma}[Balanced co-components]\label{lem:balanced}
Every balanced co-component \(F_i\) satisfies
\[
                         \rho_i=m_i,\qquad m_i\ge s.
\]
If \(m_i=s\), then its \(s\) edges are pairwise disjoint and
\[
                         c_i=\frac1s\sum_{e\in F_i}x_e.
\]
\end{lemma}

\begin{proof}
Fix a balanced component and abbreviate \(c=c_i\), \(m=m_i\).  Since
\(x_e-c\in\mathcal D_i\) and \(c\perp\mathcal D_i\),
\[
                         c^{\mathsf T}x_e=\|c\|^2=1.
\]
Let \(Q\) be the disjointness graph on \(F_i\).  It is connected because
\(F_i\) is a complement component.  Put \(y_e=x_e-c\).  By the incidence inner-product formula and \(c^{\mathsf T}x_e=1\),
\[
y_e^{\mathsf T}y_f=
\begin{cases}
s-1,&e=f,\\
0,&e\ne f\text{ and }e\cap f\ne\varnothing,\\
-1,&e\cap f=\varnothing.
\end{cases}
\]
Thus the Gram matrix of the \(y_e\) is
\[
                         (s-1)I-A(Q).
\]

The vector \(c\) lies in the affine hull of the rows in \(F_i\):
\(x_e-c\) is a linear combination of row differences.  Thus
\[
             c=\sum_{e\in F_i}\lambda_e x_e,
             \qquad \sum_e\lambda_e=1.
\]
Equivalently, \(\sum_e\lambda_e y_e=0\), so the displayed Gram matrix is singular.  It is
positive semidefinite, hence every eigenvalue of \(A(Q)\) is at most
\(s-1\), and \(s-1\) is an eigenvalue.  Since \(Q\) is connected,
Perron--Frobenius theory gives a one-dimensional kernel for this Gram matrix,
spanned by a vector with all entries of one sign.  In particular
\[
                         s-1\le m-1,
\]
so \(m\ge s\).

If \(\sum_e a_e x_e=0\), then coordinate sums give \(\sum_e a_e=0\), and
therefore \(\sum_e a_e y_e=0\).  A nonzero kernel vector of this Gram matrix has all
coordinates of one sign, whereas \(\sum_e a_e=0\).  Hence every \(a_e\)
vanishes and the rows are independent.

If \(m=s\), the connected graph \(Q\) has order \(s\) and spectral
radius \(s-1\), so \(Q=K_s\).  The edges are pairwise disjoint.  The
kernel of \(sI-J\) is spanned by the all-ones vector, which gives the claimed formula for \(c\).
\end{proof}

\begin{lemma}[Parallel-class bound]\label{lem:class-bound}
At a fixed join node, at most \(s+1\) balanced co-components have size
exactly \(s\).
\end{lemma}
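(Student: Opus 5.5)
By Lemma \ref{lem:balanced}, every balanced co-component of size exactly \(s\) consists of \(s\) pairwise disjoint edges, and by Lemma \ref{lem:join-rank} all balanced co-components have the same projection \(c=c_i\). So all such co-components share the vector
\[
 c=\frac1s\sum_{e\in F_i}x_e .
\]
The plan is to use this common vector to show that all these co-components live on one point set, and then count their edges against the lines of an affine plane.

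\textbf{Step 1: a common point set.} Since \(c=\frac1s\sum_{e\in F_i}x_e\) for each size-\(s\) balanced co-component \(F_i\), the support of \(c\) is the union \(U_i\) of the edges of \(F_i\). Hence all the \(U_i\) coincide with a single set \(U\). This set has \(s\cdot s=s^2\) points, and every such \(F_i\) partitions \(U\) into \(s\) disjoint blocks of size \(s\), i.e.\ it is a parallel class on \(U\).

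\textbf{Step 2: counting pairs.} Suppose there are \(k\) such co-components \(F_{i_1},\dots,F_{i_k}\). They are distinct co-components of the join, so their edge sets are disjoint. Edges from different co-components meet, and by linearity they meet in exactly one point. Edges within one co-component are disjoint. Consequently no pair of points of \(U\) lies in two distinct edges of \(F_{i_1}\cup\cdots\cup F_{i_k}\). Each edge covers \(\binom s2\) pairs, so
\[
 k\,s\binom s2\le\binom{s^2}2 .
\]
This rearranges to \(k\,s(s-1)\le s^2-1\), that is, \(k\le s+1\).

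The only point needing care is Step 1: identifying the support of \(c\) with the union of edges, and using the equality of the norm-one \(c_i\) from Lemma \ref{lem:join-rank}. Once that is in place, the pair count in Step 2 is routine. As a remark, when \(k=s+1\) the count is tight, and the co-components form an affine plane on \(U\).
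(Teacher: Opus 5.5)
Your proof is correct. Step 1 matches the paper's first step, but Step 2 takes a different route.

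\textbf{The paper's route.} It bounds the number of classes by linear algebra. It sets $\mathcal W_i=\operatorname{span}\{x_e-\frac1s\1_U : e\in F_i\}$ and checks three things: each $\mathcal W_i$ is $(s-1)$-dimensional, lies in $\1_U^\perp$, and is orthogonal to the others. Orthogonality uses the fact that edges in distinct classes meet exactly once. The paper then concludes $a(s-1)\le s^2-1$.

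\textbf{Your route.} You double count point pairs in $U$, and the count needs less than you invoke. Linearity alone already says that no pair of points lies in two distinct edges. So Step 2 does not use the cross-intersection property of the join, only that all the edges lie in $U$. For the same reason, the count bounds the total number of edges of $F$ contained in $U$ by $s(s+1)$, not just the edges of the size-$s$ classes. It also shows at once that the equality case $k=s+1$ is an $S(2,s,s^2)$; this is exactly the count the paper carries out separately in the equality part of Theorem~\ref{thm:rank}.

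\textbf{What the paper's version buys.} It stays inside the rank framework. Its $\mathcal W_i$ are precisely the spaces $\mathcal D_i$ of Lemma~\ref{lem:join-rank}, so the bound comes from the same orthogonal decomposition that drives the rank formula.

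\textbf{One slip to fix.} The inequality $ks\binom s2\le\binom{s^2}2$ rearranges to $k(s-1)\le s^2-1$, equivalently $ks\le s(s+1)$. It does not rearrange to $ks(s-1)\le s^2-1$, which taken literally would force $k\le 1$. Your final conclusion $k\le s+1$ is nonetheless right.
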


\begin{proof}
Let \(F_1,\ldots,F_a\) be the balanced co-components of size \(s\).
By Lemma \ref{lem:join-rank}, their vectors \(c_i\) are equal to a
common vector \(c\).  By Lemma \ref{lem:balanced}, every \(F_i\)
consists of \(s\) pairwise disjoint \(s\)-sets and
\[
                         c=\frac1s\sum_{e\in F_i}x_e.
\]
Consequently every \(F_i\) partitions the same set \(U\) of \(s^2\)
points, and \(c=s^{-1}\1_U\).  Edges in distinct classes meet once.

For each \(i\), let
\[
 \mathcal W_i=
 \operatorname{span}\left\{x_e-\frac1s\1_U:e\in F_i\right\}.
\]
This space has dimension \(s-1\) and lies in \(\1_U^\perp\).  If
\(e\in F_i\), \(f\in F_j\), and \(i\ne j\), then
\[
\left(x_e-\frac1s\1_U\right)^{\mathsf T}
\left(x_f-\frac1s\1_U\right)=1-1-1+1=0.
\]
The \(\mathcal W_i\) are mutually orthogonal subspaces of the
\((s^2-1)\)-dimensional space \(\1_U^\perp\).  Hence
\[
                    a(s-1)\le s^2-1=(s-1)(s+1),
\]
and \(a\le s+1\).
\end{proof}

\begin{theorem}[Cograph incidence rank]\label{thm:rank}
Let \(H\) be a finite linear \(s\)-uniform hypergraph, \(s\ge2\), whose
line graph is a cograph.  Then
\[
\boxed{(s+1)\rank_{\R}N(H)\ge s|E(H)|.}
\]
Equality holds if and only if every edge-containing component of \(H\)
is a Steiner system \(S(2,s,s^2)\).
\end{theorem}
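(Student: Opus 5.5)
The plan is to prove the inequality by induction on \(|E(H)|\), following the cograph decomposition of Lemma \ref{lem:cograph-decomposition}. The basic tool is that \(R_s\) is additive over connected components of \(L(H)\). Distinct components of \(L(H)\) use disjoint vertex sets, so \(N(H)\) is block diagonal and its rank is the sum of the component ranks. It therefore suffices to treat \(H\) with connected line graph. Note also that the statement's ``edge-containing components of \(H\)'' are exactly the components of \(L(H)\).

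For the base case, a single edge gives \(R_s=(s+1)-s=1>0\). For a connected line graph with at least two vertices, we are at a join node \(F=F_1\dotcupunion\cdots\dotcupunion F_p\). Each \(F_i\) is a proper subfamily whose line graph is an induced subgraph, hence again a cograph, and \(\rank N(F_i)=\rho_i\). By Lemma \ref{lem:join-rank},
\[
R_s(F)=\sum_{i=1}^p R_s(F_i)-(s+1)\max\{z-1,0\}.
\]
\begin{itemize}
\item For unbalanced \(F_i\), induction gives \(R_s(F_i)\ge0\).
\item For balanced \(F_i\), Lemma \ref{lem:balanced} gives \(\rho_i=m_i\ge s\), so \(R_s(F_i)=m_i\).
\end{itemize}
Let \(a\) be the number of balanced co-components of size exactly \(s\) and \(b\) the number of size at least \(s+1\), so \(z=a+b\). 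If \(z\ge1\), then
\[
R_s(F)\ge as+b(s+1)-(s+1)(a+b-1)=(s+1)-a\ge0,
\]
where the last step is Lemma \ref{lem:class-bound}. If \(z=0\), the bound \(R_s(F)\ge0\) is immediate from induction.

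For the equality case, all the inequalities above must be tight. If \(z=0\), every \(R_s(F_i)=0\); I expect this to be the main obstacle. Then by induction each \(F_i\) is a disjoint union of affine planes \(S(2,s,s^2)\), and some edge of another co-component meets every block of such a plane. This contradicts Lemma \ref{lem:no-transversal}, so in fact \(z\ge1\). Tightness then forces
\begin{itemize}
\item \(a=s+1\) and \(b=0\);
\item \(R_s(F_i)=0\) for every unbalanced \(F_i\).
\end{itemize}

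By the proof of Lemma \ref{lem:class-bound}, the \(s+1\) classes are partitions of a common \(s^2\)-set \(U\) into lines of size \(s\), and lines from distinct classes meet exactly once. Counting covered pairs, \((s+1)s\binom{s}{2}=\binom{s^2}{2}\). Linearity makes the coverage injective, so these \(s(s+1)\) edges form an \(S(2,s,s^2)\).

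Now suppose an unbalanced co-component existed. It is nonempty with \(R_s=0\), and each of its edges meets every edge of the balanced classes, i.e.\ every block of this design. Lemma \ref{lem:no-transversal} forbids this. Hence \(F\) is exactly the affine plane.

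For the converse, let \(H\) be an affine plane of order \(s\). Its line graph is complete multipartite with parts the parallel classes, hence a cograph. Each part is balanced, with \(c_i=s^{-1}\1_U\) of norm one. So \(z=s+1\), and the rank formula gives
\[
\rank N(H)=s(s+1)-s=s^2,\qquad (s+1)s^2=s\cdot s(s+1).
\]
Equality thus holds on each component, and by additivity for every \(H\) whose edge-containing components are all \(S(2,s,s^2)\).
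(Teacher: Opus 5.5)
Your proof follows the paper's own proof: induction through the root join decomposition, the bound $(s+1)-a\ge0$ from Lemma~\ref{lem:class-bound}, and Lemma~\ref{lem:no-transversal} to rule out the $z=0$ equality case and any edges beyond the plane. The only bookkeeping differences are these. You write the join formula as $R_s(F)=\sum_i R_s(F_i)-(s+1)\max\{z-1,0\}$ and split $z=0$ versus $z\ge1$, where the paper splits $z\le1$ versus $z\ge2$. You also check the converse with Lemma~\ref{lem:join-rank} rather than $N^{\mathsf T}N=sI+J$. That route needs the one-line check $c_i=s^{-1}\1_U$, which holds because $x_e-s^{-1}\1_U=s^{-1}\sum_{f}(x_e-x_f)\in\mathcal D_i$ and $\1_U\perp\mathcal D$.

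One step is misjustified. Tightness does not force $b=0$, because your lower bound $(s+1)-a$ does not depend on $b$. Equality forces only three things: $a=s+1$, $m_i=s+1$ for each nonminimal balanced co-component, and $R_s(F_i)=0$ for each unbalanced one. Extra balanced co-components of size $s+1$ therefore survive your inequality chain. Your later exclusion argument covers only unbalanced co-components.

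The repair is immediate, because the argument you give for unbalanced co-components never actually uses $R_s(F_i)=0$. Any edge in any co-component other than the $s+1$ parallel classes meets every block of the resulting $S(2,s,s^2)$, which contradicts Lemma~\ref{lem:no-transversal}. The paper concludes the same way: the lemma ``excludes any additional joined edge,'' balanced or not.
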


\begin{proof}
Write
\[
       \rho(H)=\rank N(H),\qquad
       \sigma(H)=(s+1)\rho(H)-s|E(H)|.
\]
We induct on \(m=|E(H)|\).  The cases \(m=0,1\) are immediate, with
\(\sigma(H)=1\) when \(m=1\).

If \(L(H)\) is disconnected, distinct line-graph components have
disjoint point supports.  Rank and \(\sigma\) are additive, and the
induction hypothesis applies.

Now suppose that \(L(H)\) is connected and \(m\ge2\).  Use the root join decomposition and notation above,
and let \(\mathcal B\) be the set of balanced indices, with
\(z=|\mathcal B|\).  If \(z\le1\), Lemma \ref{lem:join-rank} and
induction give
\[
           \rho(H)=\sum_i\rho_i
           \ge\frac{s}{s+1}\sum_i m_i
           =\frac{sm}{s+1}.
\]

Suppose \(z\ge2\).  Balanced components have full row rank by
Lemma \ref{lem:balanced}.  Therefore
\[
\rho(H)
=\sum_{i\notin\mathcal B}\rho_i
+\sum_{i\in\mathcal B}m_i-z+1
\ge
\frac{s}{s+1}\sum_{i\notin\mathcal B}m_i
+\sum_{i\in\mathcal B}m_i-z+1.
\]
It remains to prove
\[
                 \sum_{i\in\mathcal B}m_i
                 \ge(s+1)(z-1).
\]
Let \(a\) be the number of balanced components of size \(s\).
Every other balanced component has size at least \(s+1\), and
Lemma \ref{lem:class-bound} gives \(a\le s+1\).  Thus
\[
\sum_{i\in\mathcal B}m_i
\ge as+(z-a)(s+1)
=(s+1)z-a
\ge(s+1)(z-1).
\]
This proves the incidence-rank inequality.

We classify equality by the same induction.  At a disconnected line
graph, equality holds precisely when it holds in every component.
Suppose the line graph is connected.  If \(z\le1\), the exact formula
gives
\[
                         \sigma(H)=\sum_i\sigma(F_i).
\]
Equality would force equality in every \(F_i\).  By induction, one
\(F_i\) contains an \(S(2,s,s^2)\).  An edge in another joined
co-component would meet every block of that design, contrary to
Lemma \ref{lem:no-transversal}.  Hence equality is impossible.

If \(z\ge2\), the exact formula is
\[
\sigma(H)=
\sum_{i\notin\mathcal B}\sigma(F_i)
+\sum_{i\in\mathcal B}m_i-(s+1)(z-1).
\]
The estimate above gives
\[
\sum_{i\in\mathcal B}m_i-(s+1)(z-1)\ge s+1-a\ge0.
\]
If equality holds, then \(a=s+1\).  These \(s+1\) minimal balanced
components are parallel classes on a common set \(U\) of \(s^2\)
points.  The number of unordered point-pairs covered by their blocks is
\[
                         s(s+1)\binom{s}{2}
                         =\binom{s^2}{2}.
\]
Linearity prevents double counting, so every pair of points of \(U\)
lies in exactly one block.  The blocks form an \(S(2,s,s^2)\).
Lemma \ref{lem:no-transversal} excludes any additional joined edge.

Conversely, if \(N\) is the incidence matrix of \(S(2,s,s^2)\), then
\[
                         N^{\mathsf T}N=sI+J,
\]
so \(N\) has full column rank \(s^2\).  Since the design has
\(s(s+1)\) blocks, equality holds in the incidence-rank inequality.
\end{proof}

\begin{corollary}[Linear four-edge paths]\label{cor:path}
For every \(s\ge2\) and every finite linear \(P_4^s\)-free
\(s\)-uniform hypergraph \(H\),
\[
                         |E(H)|\le\frac{s+1}{s}|V(H)|.
\]
Equality holds if and only if \(H\) has no isolated vertices and its
edge-containing components are vertex-disjoint copies of
\(S(2,s,s^2)\).
\end{corollary}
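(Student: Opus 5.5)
The corollary should follow from Theorem~\ref{thm:rank} combined with Lemma~\ref{lem:path-cograph}. By Lemma~\ref{lem:path-cograph}, a linear \(P_4^s\)-free \(s\)-graph \(H\) has a cograph line graph, so Theorem~\ref{thm:rank} applies. It gives
\[
 s|E(H)|\le(s+1)\rank_{\R}N(H).
\]
The rank of \(N(H)\) is at most its number of columns. After isolated vertices are removed these columns are indexed by \(V^+(H)\), so
\[
 \rank_{\R}N(H)\le|V^+(H)|\le|V(H)|.
\]
Chaining the two inequalities and dividing by \(s\) yields the bound. Equivalently, one can read the bound off identity \eqref{eq:rank-column-split}: the quantity \((s+1)|V^+(H)|-s|E(H)|\) equals \(R_s(H)+(s+1)C(H)\), a sum of two nonnegative terms.

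For the equality case, equality in the chain forces three things at once:
\begin{enumerate}
\item \(|V^+(H)|=|V(H)|\), so \(H\) has no isolated vertices;
\item \(C(H)=0\), so \(N(H)\) has full column rank;
\item \(R_s(H)=0\).
\end{enumerate}
By the equality clause of Theorem~\ref{thm:rank}, \(R_s(H)=0\) means every edge-containing component is an \(S(2,s,s^2)\). Distinct components are vertex-disjoint by definition.

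These are necessary conditions; I still need to check that they are sufficient and that the full-column-rank condition adds nothing. The converse computation in the proof of Theorem~\ref{thm:rank} gives \(N^{\mathsf T}N=sI+J\) for each design, which is positive definite. So each design block of \(N(H)\) has full column rank \(s^2\), and the block-diagonal structure over vertex-disjoint components gives \(C(H)=0\) automatically. Counting then gives \(s(s+1)\) edges per \(s^2\) points, so with no isolated vertices,
\[
 |E(H)|=\frac{s+1}{s}|V(H)|.
\]

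I expect no real obstacle here. The only care point is the bookkeeping between \(V(H)\) and \(V^+(H)\): isolated vertices make the bound strict, which is why that condition appears in the equality statement.
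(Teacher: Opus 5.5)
Your proposal is correct and follows essentially the same route as the paper. You use Lemma~\ref{lem:path-cograph} to get a cograph line graph, then the chain \(s|E(H)|\le(s+1)\rank N(H)\le(s+1)|V(H)|\), and finally forced equality in every step together with the equality clause of Theorem~\ref{thm:rank}. Your extra verification that \(C(H)=0\) via \(N^{\mathsf T}N=sI+J\) is harmless but unnecessary: the converse already follows from counting \(s(s+1)\) blocks on \(s^2\) points per component.
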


\begin{proof}
By Lemma \ref{lem:path-cograph}, the line graph is a cograph.  Theorem
\ref{thm:rank} gives
\[
             \frac{s|E(H)|}{s+1}\le\rank N(H)\le |V(H)|.
\]
Equality forces equality in both inequalities, and the statement
follows from Theorem \ref{thm:rank}.  The converse is immediate.
\end{proof}

\section{Finite nets and defect one}\label{sec:nets}

Standard finite-geometric terminology is used \cite{Dembowski1968}, with
all definitions and arguments needed here stated explicitly.

An \(a\)-\emph{net of order \(s\)} is a linear \(s\)-uniform incidence
structure on \(s^2\) points with \(a\) parallel classes.  Each class
consists of \(s\) disjoint lines partitioning the point set, and lines
from different classes meet exactly once.  The case \(a=s+1\) is an
affine plane.  Put
\[
                         d=s+1-a,
\]
the deficiency of the net.

\begin{theorem}[Finite-net spectrum and defect]\label{thm:net-spectrum}
Let \(\mathcal N\) be an \(a\)-net of order \(s\), with incidence matrix
\(N\).  Then:
\begin{enumerate}
\item \(L(\mathcal N)=K_{s,\ldots,s}\), with \(a\) parts;
\item the spectrum of \(NN^{\mathsf T}\) is
      \[
      sa\ (1),\qquad s\ \bigl(a(s-1)\bigr),\qquad
      0\ (a-1);
      \]
\item every point has degree \(a\), and the eigenvalues of
      \(a^{-1}NN^{\mathsf T}\) are
      \[
      s\ (1),\qquad \frac sa\ \bigl(a(s-1)\bigr),\qquad
      0\ (a-1);
      \]
\item
      \[
      \rank N=1+a(s-1);
      \]
\item every line \(B\) has row leverage
      \[
      \ell_{\mathcal N}(B)=\frac{1+a(s-1)}{as};
      \]
\item the defects are
      \[
      \boxed{R_s(\mathcal N)=d,}
      \qquad
      \boxed{C(\mathcal N)=(s-1)d,}
      \]
      and
      \[
      \boxed{(s+1)|V(\mathcal N)|-s|E(\mathcal N)|=s^2d.}
      \]
\end{enumerate}
\end{theorem}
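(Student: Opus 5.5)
The plan is to compute everything from the Gram matrix \(NN^{\mathsf T}=sI+A(L(\mathcal N))\) and then read off the remaining items by linear algebra.

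For item 1, note that lines in the same parallel class are disjoint and lines in different classes meet exactly once. So \(L(\mathcal N)\) is the complete multipartite graph whose parts are the \(a\) classes, each of size \(s\).

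For item 2, write \(A(K_{s,\ldots,s})=J_{as}-I_a\otimes J_s\). Its spectrum is
\[
s(a-1)\ (1),\qquad 0\ \bigl(a(s-1)\bigr),\qquad -s\ (a-1).
\]
The three eigenspaces are: the all-ones vector; vectors summing to zero within every class; and vectors constant on classes with total sum zero. Adding \(sI\) gives the stated spectrum \(sa,\ s,\ 0\). Item 4 follows at once: \(\rank N=\rank NN^{\mathsf T}=1+a(s-1)\).

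For item 3, each point lies on exactly one line of each class, so every point has degree \(a\). The nonzero spectra of \(NN^{\mathsf T}\) and \(N^{\mathsf T}N\) coincide, so \(a^{-1}NN^{\mathsf T}\) has eigenvalues \(s\), \(s/a\), \(0\) with the same multiplicities. I take the statement to mean exactly this scaling.

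For item 5, the projector \(P=NN^\dagger\) onto the column space of \(N\) equals the projector onto the range of \(NN^{\mathsf T}\). This range is the orthogonal complement of the kernel \(K\) of dimension \(a-1\), namely class-constant vectors with total sum zero. So \(P=I-P_K\). To compute \(P_K\), let \(u\) be the class-indicator map. Projecting onto class-constant vectors gives \(s^{-1}\,I_a\otimes J_s\), and removing the all-ones part gives
\[
P_K=s^{-1}I_a\otimes J_s-(as)^{-1}J.
\]
Its diagonal entry is \(1/s-1/(as)=(a-1)/(as)\). Hence
\[
\ell(B)=1-\frac{a-1}{as}=\frac{as-a+1}{as}=\frac{1+a(s-1)}{as}.
\]
As a consistency check, the leverages sum to the rank.

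For item 6, use \(|E|=as\) and \(|V|=s^2\):
\[
R_s=(s+1)\bigl(1+a(s-1)\bigr)-s\cdot as=s+1+a(s^2-1)-as^2=s+1-a=d.
\]
Next,
\[
C=s^2-1-a(s-1)=(s-1)(s+1-a)=(s-1)d.
\]
Finally, identity \eqref{eq:rank-column-split} gives
\[
(s+1)s^2-s\cdot as=s^2(s+1-a)=s^2d,
\]
which matches \(d+(s+1)(s-1)d=s^2d\).

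The only step needing care is identifying the eigenspaces of the complete multipartite adjacency matrix together with the explicit kernel projector. The rest is arithmetic.
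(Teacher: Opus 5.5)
Your proposal is correct. For items 1--4 and 6 it follows the paper's route: read off the spectrum of \(NN^{\mathsf T}=sI+A(K_{s,\ldots,s})\), take the rank from it, and do the arithmetic. Your explicit eigenspace description fills in the spectrum that the paper simply quotes.

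The one genuine difference is item 5. The paper argues by symmetry. The projector onto the column space depends only on the Gram matrix, and the Gram pattern is invariant under all permutations within and among the parts. Hence the diagonal of \(P\) is constant, and it equals \(\operatorname{tr}P/(as)=\rank N/(as)\). You instead write down the kernel projector \(P_K=s^{-1}I_a\otimes J_s-(as)^{-1}J\) and read off its diagonal.

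The symmetry argument is shorter, but it depends on the transitivity of the automorphism group of the line graph. Your computation yields the whole projector. It is exactly the method the paper itself uses later in Lemma~\ref{lem:affine-packet}, where incomplete classes break the symmetry and the leverages are no longer all equal.

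One minor point: your appeal to \(N^{\mathsf T}N\) in item 3 is unnecessary, since item 3 is just item 2 scaled by \(a^{-1}\).
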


\begin{proof}
The adjacency spectrum of the complete \(a\)-partite graph with \(s\)
vertices in every part is
\[
      s(a-1)\ (1),\qquad -s\ (a-1),\qquad
      0\ \bigl(a(s-1)\bigr).
\]
Since \(NN^{\mathsf T}=sI+A(L(\mathcal N))\), the first spectrum and
the rank formula follow.  Every point lies on one line from every
parallel class, so the normalized Gram matrix is
\(a^{-1}NN^{\mathsf T}\).  The Gram pattern is invariant under every
permutation within and among the equal parts, so the leverage diagonal
is constant; its value is rank divided by the number \(as\) of rows.
Finally,
\[
R_s(\mathcal N)
=(s+1)(1+a(s-1))-s(as)
=s+1-a=d.
\]
The remaining identities follow from the definitions and \eqref{eq:rank-column-split}.
\end{proof}

Deleting one parallel class from an affine plane therefore costs one
incidence-rank-defect unit, \(s-1\) column-rank-defect units, and \(s^2\)
units of path Tur\'an slack.  This is the discrete defect scale behind
the equality theorem.

A single edge also has incidence-rank defect one, so the hypothesis \(|E(F)|\ge2\) is necessary in the characterization below.

\begin{theorem}[First positive cograph defect]\label{thm:first-defect}
Let \(F\) be a connected linear \(s\)-uniform hypergraph whose line
graph is a cograph, and suppose \(|E(F)|\ge2\).  Then
\[
        R_s(F)=1
        \quad\Longleftrightarrow\quad
        F\text{ is an \(s\)-net of order \(s\)}.
\]
\end{theorem}

\begin{proof}
Use the root join decomposition above.  Let \(\mathcal B\) be the
balanced indices, \(z=|\mathcal B|\), and let \(a\) count the balanced
co-components of size \(s\).

If \(z\le1\), Lemma \ref{lem:join-rank} gives
\[
                         R_s(F)=\sum_i R_s(F_i).
\]
If this sum were one, then, since the join has at least two nonempty
co-components, some other co-component would have defect zero.
By Theorem \ref{thm:rank}, it contains an affine plane.  Any edge in
the positive-defect co-component would meet every line of that plane,
contrary to Lemma \ref{lem:no-transversal}.  Hence \(z\ge2\).

The exact rank formula gives
\[
R_s(F)=
\sum_{i\notin\mathcal B}R_s(F_i)
+\sum_{i\in\mathcal B}m_i-(s+1)(z-1).
\]
Both terms are nonnegative integers, and
\[
\sum_{i\in\mathcal B}m_i-(s+1)(z-1)
\ge s+1-a.
\]
A zero value in the preceding lower bound would force \(a=s+1\).  Those minimal components
already form an affine plane, and Lemma \ref{lem:no-transversal}
excludes every further joined edge.  The resulting defect would be
zero, not one.  Therefore the join contribution is one, and every
nonbalanced term has defect zero.  A nonbalanced component cannot
actually occur, since it would contain an affine plane joined to an
edge in a balanced component.  Thus all root co-components are
balanced.

The same lower bound gives \(a\ge s\).  The case \(a=s+1\) again gives an
affine plane and defect zero, so \(a=s\).  These \(s\) minimal
components are \(s\) parallel classes on a common \(s^2\)-point set
\(U\); they form an \(s\)-net.

It remains to exclude another balanced co-component.  Every edge \(B\)
in such a component meets all \(s\) lines of each existing class.
A single class partitions \(U\), so the \(s\)-set \(B\) is contained in
\(U\) and is a transversal of every class.  Define a graph \(Q\) on
\(U\) by joining two points not lying together on a net line.  Each
point has
\[
                    s^2-1-s(s-1)=s-1
\]
neighbors in \(Q\).  A transversal containing a point \(x\) consists
of \(x\) and all \(s-1\) of its neighbors in \(Q\).  Hence two distinct
transversals cannot intersect.  Every extra co-component is therefore a
family of pairwise disjoint \(s\)-sets contained in the \(s^2\)-point
set \(U\).  It has at most \(s\) members, while
Lemma \ref{lem:balanced} gives at least \(s\).  It therefore has size
\(s\), making it another minimal
component and contradicting \(a=s\).  Thus no extra component exists.

Conversely, Theorem \ref{thm:net-spectrum} with \(a=s\) gives
\(R_s=1\).
\end{proof}

\begin{remark}
Theorem \ref{thm:first-defect} is an integral stability statement, not
an edit-distance theorem.  A global stability result would also have
to control how these defect units distribute through the cograph
cotree.
\end{remark}

\section{Join-node defect formula}\label{sec:toll}

The join formula used in Theorem~\ref{thm:rank} gives an exact recursion
for \(R_s\).  At a join node, retain the notation
\[
 E(F)=F_1\dotcupunion\cdots\dotcupunion F_p,\qquad
 m_i=|F_i|,\qquad \rho_i=\operatorname{rank}N(F_i),
\]
and let \(\mathcal B\) be the set of balanced indices from
Lemma~\ref{lem:join-rank}.  Put
\[
 z=|\mathcal B|,
 \qquad
 a=\bigl|\{i\in\mathcal B:m_i=s\}\bigr|.
\]

\begin{lemma}[Join-node defect formula]\label{lem:cotree-toll}
At a join node,
\[
 R_s(F)=
 \begin{cases}
 \displaystyle\sum_{i=1}^p R_s(F_i),&z\le1,\\[2mm]
 \displaystyle\sum_{i\notin\mathcal B}R_s(F_i)+J_s(F),&z\ge2,
 \end{cases}
\]
where
\[
 J_s(F)=\sum_{i\in\mathcal B}m_i-(s+1)(z-1)
        =s+1-z+\sum_{i\in\mathcal B}(m_i-s).
\]
Moreover
\[
 J_s(F)\ge s+1-a\ge0.
\]
At a disjoint-union node, \(R_s\) is additive.
\end{lemma}

\begin{proof}
Lemma~\ref{lem:join-rank} gives
\[
 \operatorname{rank}N(F)=
 \sum_i\rho_i-\max\{z-1,0\}.
\]
Substitution in the definition of \(R_s\) gives the two displayed
formulas.  If \(z\ge2\), every balanced child has at least \(s\) edges
by Lemma~\ref{lem:balanced}, and every balanced child not counted by
\(a\) has at least \(s+1\) edges.  Hence
\[
 \sum_{i\in\mathcal B}(m_i-s)\ge z-a,
\]
which gives the lower bound for \(J_s(F)\).  Additivity at a union
node follows because distinct line-graph components have disjoint point
supports.
\end{proof}

In particular, if \(R_s(F)\le d\) and a join node has \(z\ge2\), then
\begin{equation}\label{eq:minimal-balanced}
 a\ge s+1-d.
\end{equation}
The \(a\) minimal balanced children are parallel classes on one
\(s^2\)-point set by Lemma~\ref{lem:class-bound}.  They therefore form
an \(a\)-net of deficiency at most \(d\).

The same calculation gives the rank of an incomplete collection of parallel
classes.  This will be used after a net has been completed.

\begin{lemma}[Rank and leverage in affine-plane subsystems]\label{lem:affine-packet}
Let \(\Pi\) be an affine plane of order \(s\).  Let \(F\) consist of
nonempty subfamilies \(\mathcal A_1,\ldots,\mathcal A_p\) of distinct
parallel classes of \(\Pi\), with \(m_i=|\mathcal A_i|\le s\).  Put
\[
 M=\sum_{i=1}^p m_i,\qquad
 z=\bigl|\{i:m_i=s\}\bigr|.
\]
Then
\[
 \operatorname{null}N(F)=\max\{z-1,0\},
 \qquad
 \operatorname{rank}N(F)=M-\max\{z-1,0\}.
\]
If \(z\ge1\), every row in a full class has leverage
\[
 1-\frac{z-1}{zs},
\]
and every row in an incomplete class has leverage \(1\).  If \(z=0\),
all rows have leverage \(1\).
\end{lemma}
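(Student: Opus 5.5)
We argue directly in the row space, exploiting the fact that lines of an affine plane satisfy explicit inner-product relations. For a parallel class \(\mathcal C\) of \(\Pi\), put \(U=V(\Pi)\) and \(y_B=x_B-\frac1s\1_U\) for \(B\in\mathcal C\). As in the proof of Lemma~\ref{lem:class-bound}, vectors \(y_B\) coming from distinct classes are orthogonal. Within one class, \(\sum_{B\in\mathcal C}y_B=0\). The Gram matrix of \(\{y_B:B\in\mathcal C\}\) is \(sI-J\), so any proper subfamily of a class gives linearly independent \(y_B\), while a full class spans exactly an \((s-1)\)-dimensional space \(\mathcal W_{\mathcal C}\). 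The row space of \(N(F)\) is spanned by the \(x_B=y_B+\frac1s\1_U\). We therefore decompose it as \(\bigoplus_i \operatorname{span}\{y_B:B\in\mathcal A_i\}\) plus the extra direction \(\1_U\), which is orthogonal to every \(y_B\).

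For the rank, suppose \(\sum_B\alpha_Bx_B=0\). Projecting onto \(\1_U\) gives \(\sum_B\alpha_B=0\), and projecting onto each orthogonal block gives \(\sum_{B\in\mathcal A_i}\alpha_By_B=0\) for every \(i\). In an incomplete class this forces \(\alpha_B=0\). In a full class it forces \(\alpha_B\) to equal a constant \(t_i\) on that class. The remaining constraint is \(s\sum_{i\text{ full}}t_i=0\). The null space is therefore \(\{(t_i)_{i\text{ full}}:\sum t_i=0\}\), which has dimension \(\max\{z-1,0\}\); when \(z=0\) the only solution is trivial. The rank formula then follows from rank\(=M-\)nullity.

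For the leverage, use \(\ell(e)=1-\|\Pi_{\ker N^{\mathsf T}}\,\mathbf e_e\|^2\). Here \(P_F\) projects onto the column space of \(N(F)\), whose orthogonal complement in \(\R^{E(F)}\) is \(\ker N(F)^{\mathsf T}\), the space of row dependencies just computed. If \(z=0\), or if \(e\) lies in an incomplete class, every null vector vanishes at \(e\), so the leverage is \(1\). If \(z\ge1\), the null space is \(\{v_t=\sum_{i\text{ full}}t_i\1_{\mathcal A_i}:\sum t_i=0\}\), and this is isometric to \(\{t\in\R^z:\1^{\mathsf T}t=0\}\) with the norm scaled by \(\sqrt s\). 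The projector onto it is therefore \(\frac1s\bigl(I_z-\frac1zJ_z\bigr)\otimes J_s\), written in block form over the full classes. Its diagonal entry at any row of a full class is \(\frac1s\cdot\frac{z-1}{z}\). This gives the leverage \(1-\frac{z-1}{zs}\), which correctly equals \(1\) when \(z=1\).

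The main care point is the orthogonality and within-class Gram computation. It requires that distinct subfamilies come from distinct parallel classes, so that lines meet exactly once, and that lines within a class are disjoint. Both facts are immediate from the definition of an affine plane. Everything else is linear algebra.
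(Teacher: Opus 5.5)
Your proof is correct, and your nullity step takes a genuinely different route from the paper's. The paper stays in \(\R^{E(F)}\): it writes \(N(F)N(F)^{\mathsf T}=sI+A(K_{m_1,\ldots,m_p})\) and solves the kernel equations to get class constants \(c_i\) with \((s-m_i)c_i+S=0\). It then needs a separate step when \(z=0\), namely the identity \(S=-S\sum_i m_i/(s-m_i)\) forcing \(S=0\).

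You work instead in the point space \(\R^{U}\), reusing the orthogonal splitting from the proof of Lemma~\ref{lem:class-bound} into \(\R\1_U\) and the centered class spans. This decouples the classes completely: everything reduces to the single-class Gram matrix \(sI-J\), and the case \(z=0\) needs no special treatment. The price is that you rely on the ambient \(s^2\)-point set. The orthogonality relations \(y_B\perp y_{B'}\) across classes and \(y_B\perp\1_U\) need every line inside \(U\) and \(|U|=s^2\), which holds here because \(F\subseteq\Pi\). The paper's computation uses only the Gram identity, so it does not depend on the embedding.

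Once the space of row dependencies is identified as the zero-sum class-constant vectors on the full classes, your leverage computation coincides with the paper's. Your projector \(\frac1s\bigl(I_z-\frac1zJ_z\bigr)\otimes J_s\) simply makes explicit the diagonal the paper states.

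One wording point: when \(z=0\) the row space is a proper hyperplane of \(\R\1_U\oplus\bigoplus_i\operatorname{span}\{y_B:B\in\mathcal A_i\}\), since it does not contain \(\1_U\). So ``decompose it as'' should read ``it is contained in''. Your argument only uses the containment, so nothing breaks.
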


\begin{proof}
The line graph is \(K_{m_1,\ldots,m_p}\), so
\[
 N(F)N(F)^{\mathsf T}=sI+A(K_{m_1,\ldots,m_p}).
\]
Let \(x=(x_B)\) lie in the kernel of this Gram matrix and put
\[
 S=\sum_{i=1}^p\sum_{B\in\mathcal A_i}x_B.
\]
Subtracting two kernel equations in the same class shows that
\(x_B=c_i\) on \(\mathcal A_i\).  The remaining equations are
\[
 (s-m_i)c_i+S=0\qquad(1\le i\le p).
\]
If one class is full, then \(S=0\).  The constants on incomplete
classes vanish, while those on the \(z\) full classes satisfy one
linear relation, so the nullity is \(z-1\).  If no class is full,
substitution into the definition of \(S\) gives
\[
 S=-S\sum_i\frac{m_i}{s-m_i},
\]
hence \(S=0\) and all \(c_i=0\).

For \(z\ge1\), the kernel consists of class-constant vectors supported
on the full classes whose class constants sum to zero.  Its orthogonal
projector has diagonal \((z-1)/(zs)\) on each row of a full class and
zero on every incomplete-class row.  Subtracting from the identity
gives the leverage formula.
\end{proof}

\section{Completion and the bounded-defect classification}\label{sec:classification}

A net of order \(s\) and deficiency \(h\) has \(s+1-h\) parallel
classes.  We use two classical completion results.  Bruck proved that
if \(s>(h-1)^2\), then an embedding of such a net in an affine plane,
when it exists, is unique and every transversal of the net is a line
of that completion \cite{Bruck1963}.  His completion theorem also gives
existence under
\[
 s>P(h-1),\qquad
 P(x)=\frac12x^4+x^3+x^2+\frac34x.
\]
Metsch proved the sharper sufficient condition
\[
 3s>8h^3-18h^2+8h+4
\]
for completion \cite{Metsch1991}.

\begin{definition}[Completion hypothesis]\label{def:comp}
For \(0\le d\le s-1\), write \(\operatorname{Comp}(s,d)\) if
\begin{enumerate}
\item every net of order \(s\) and deficiency at most \(d\) embeds in
      an affine plane of order \(s\);
\item \(s>(d-1)^2\).
\end{enumerate}
\end{definition}

Thus \(s>P(d-1)\) implies \(\operatorname{Comp}(s,d)\).  The result of
Metsch implies the same conclusion whenever
\[
 3s>8d^3-18d^2+8d+4.
\]

\begin{theorem}[Completion-range classification]\label{thm:classification}
Let \(F\) be a nonempty connected linear \(s\)-uniform hypergraph whose
line graph is a cograph.  Suppose
\[
 R_s(F)=d\le s-1
\]
and \(\operatorname{Comp}(s,d)\) holds.  Exactly one of the following
occurs.
\begin{enumerate}
\item \emph{Full row rank.}
      \[
      |E(F)|=d,\qquad \operatorname{rank}N(F)=d.
      \]
\item \emph{Affine-plane subsystem.}  There are a unique affine plane
      \(\Pi\) of order \(s\) and an integer \(h\), \(0\le h\le d\),
      such that \(F\subseteq\Pi\), \(F\) contains every line in
      \(s+1-h\) parallel classes of \(\Pi\), and \(F\) contains exactly
      \[
      M=d-h
      \]
      additional lines from the remaining \(h\) classes.  If \(d>0\),
      then \(1\le h\le d\); if \(d=0\), then \(h=0\) and \(F=\Pi\).
\end{enumerate}
Conversely, every system in either class has incidence-rank defect \(d\).
\end{theorem}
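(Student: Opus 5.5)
The plan is to induct on \(|E(F)|\) along the cograph cotree. The aim is to show that every connected \(F\) with \(R_s(F)=d\le s-1\) either has full row rank or contains a net with at least two complete parallel classes. In the second case we complete that net to an affine plane \(\Pi\) using \(\operatorname{Comp}(s,d)\) and push every remaining edge into \(\Pi\). The base case \(|E(F)|=1\) gives \(d=1\) and full row rank. The full-row-rank alternative is immediate in general: if \(\rank N(F)=|E(F)|=m\), then \(R_s(F)=(s+1)m-sm=m\), so \(m=d\).

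\emph{Final computation.} Suppose \(F\subseteq\Pi\) and \(F\) contains \(k\ge1\) complete classes plus \(M\) lines from other classes. Lemma~\ref{lem:affine-packet} gives \(\rank N(F)=ks+M-(k-1)\), hence
\[
R_s(F)=(s+1)(ks+M-k+1)-s(ks+M)=s+1-k+M .
\]
Writing \(h=s+1-k\) gives \(d=h+M\). If \(d>0\) and \(h=0\), then \(F=\Pi\) and \(M=0\), a contradiction. The same computation gives the converse for the plane class, and the full-rank converse was noted above. For uniqueness of \(\Pi\), note that \(h\le d\le s-1\), so \(F\) contains a net of deficiency \(h\le d\). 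By Bruck's uniqueness (available since \(s>(d-1)^2\)), that net lies in only one plane.

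\emph{Inductive step.} Assume \(F\) is connected with at least two edges and not of full row rank, and take the root join decomposition from Lemma~\ref{lem:cotree-toll}. Every child \(F_i\) and every line-graph component of a child has defect at most \(d\), since all terms in the recursion are nonnegative. There are two cases.

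\begin{enumerate}
\item \emph{Case \(z\ge2\).} By \eqref{eq:minimal-balanced} we have \(a\ge s+1-d\ge2\). The minimal balanced children then form a net of deficiency at most \(d\) on a common \(s^2\)-set \(U\). By \(\operatorname{Comp}(s,d)\) this net embeds in an affine plane \(\Pi\).
\item \emph{Case \(z\le1\).} Here \(\rank N(F)=\sum_i\rho_i\). If every child were of full row rank, \(F\) would be too, so some child has a component \(G\) that is not of full rank. By induction, \(G\) lies in an affine plane \(\Pi\) and contains \(k=s+1-h\ge2\) complete classes, because \(h\le d\le s-1\).
\end{enumerate}

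In both cases we obtain a plane \(\Pi\) and a set of complete classes of \(\Pi\) inside \(F\) forming a net \(\mathcal N\) of deficiency at most \(d\). It remains to show that every other edge \(f\) of \(F\) is a line of \(\Pi\).

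\emph{Main obstacle.} The delicate step is that \(f\) need not be joined to all of \(\mathcal N\) directly; it may sit inside a non-balanced child, or inside the same child as \(G\). I would proceed in two stages.

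First, take an edge \(f\) in a child different from the one containing \(\mathcal N\). Such an \(f\) meets every line of a complete class, and that class partitions \(U\). So \(f\subseteq U\) and \(f\) is a transversal of \(\mathcal N\). By Bruck's theorem (\(s>(d-1)^2\)), \(f\) is a line of \(\Pi\), necessarily in a class that is not complete in \(\mathcal N\).

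Second, edges in the same child as \(\mathcal N\) are handled by the inductive description of that child. That description already places its plane-type components inside a plane, and Bruck uniqueness identifies that plane with \(\Pi\). Its full-rank components I would absorb by repeating the transversal argument against complete classes lying in a different child; such classes exist once one child is known to sit inside \(\Pi\) and the join forces meeting.

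Once \(F\subseteq\Pi\), distinct edges are distinct lines of \(\Pi\), and the final computation above finishes the proof. I expect the bookkeeping in the second stage, ensuring that every edge is joined to some complete class, to be the hardest point. If it resists, I would first try strengthening the induction hypothesis to say that any edge meeting all lines of two complete classes of a deficiency-\(\le d\) net is a plane line.
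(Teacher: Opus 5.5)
The gap is in the second stage of your $z\le1$ branch. Everything else works.

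Your $z\ge2$ branch is the paper's argument and needs no second stage. Every edge outside the $a\ge s+1-d\ge2$ minimal balanced children lies in a root child different from each of them. It therefore meets all $s$ lines of a retained class, lies in $U$, is a transversal of the net, and is a line of $\Pi$ by Bruck. So your worry about edges in non-balanced children is unfounded there. The final computation via Lemma~\ref{lem:affine-packet}, the uniqueness of $\Pi$, and the converse are also fine.

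In the $z\le1$ branch you must handle the other line-graph components of the child $F_i$ containing $G$, and neither argument you sketch works. A second plane-type component $G'$ of $F_i$ is point-disjoint from $G$, hence from the point set $U$ of $\Pi$. Bruck uniqueness concerns completions of one fixed net, so it cannot identify the plane of $G'$ with $\Pi$. For full-rank components you invoke complete classes of $\Pi$ lying in another root child. Nothing you have proved puts a complete class there; the first stage only shows that the other children consist of lines of $\Pi$ transversal to the full classes of $G$.

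The missing observation is that these extra components cannot exist.
\begin{enumerate}
\item Since $G\subseteq\Pi$ contains a full class, $V(G)=U$.
\item Pick an edge $g$ in another root child; one exists because $p\ge2$. Your first stage gives $g\subseteq U$.
\item The edge $g$ must meet every edge of $F_i$. But every line-graph component of $F_i$ other than $G$ has point support disjoint from $V(G)=U$.
\end{enumerate}
Hence $F_i=G$, all of $F$ lies in $\Pi$, and your final computation finishes this case. With this step added, your induction on $|E(F)|$ goes through, and the strengthened hypothesis you mention at the end is unnecessary.

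The paper goes one step further and shows the case is vacuous. Once $F_i=G$ is a subsystem of $\Pi$ with at least two nonempty directions, the complement of its line graph is the disjoint union of cliques on its directions. That complement is disconnected, contradicting that $F_i$ is a root co-component. So a row-dependent $F$ always has $z\ge2$ at the root, and the paper only ever completes a net built from whole root children. Its induction is on $d$, using Lemma~\ref{lem:no-transversal} to show that in the $z\le1$ case every component has positive defect strictly below $d$.
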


\begin{proof}
The identity
\[
 R_s(F)=|E(F)|-(s+1)\operatorname{null}N(F)
\]
shows that a full-row-rank system has \(|E(F)|=d\).  Conversely, if
\(|E(F)|=d\le s-1\), its incidence rows are independent.  Indeed, the
Gram matrix of any \(d\) rows is \(sI+A\), where \(A\) is a
\(0\)-\(1\) matrix with zero diagonal and every row sum at most
\(d-1<s\).  Strict diagonal dominance makes this Gram matrix positive
definite.

Assume now that the rows are dependent.  We argue by induction on
\(d\).  The case \(d=0\) is Theorem~\ref{thm:rank}.  At the root join, use the notation of Lemma~\ref{lem:cotree-toll}.  The
first claim is \(z\ge2\).

Suppose \(z\le1\).  Then
\[
 d=\sum_i R_s(F_i).
\]
No line-graph component of any \(F_i\) can have defect zero: by
Theorem~\ref{thm:rank} it would contain an affine plane, while any edge
in another root co-component would meet every line of that plane,
contrary to Lemma~\ref{lem:no-transversal}.  Hence every nonempty
line-graph component occurring among the \(F_i\) has positive defect
strictly smaller than \(d\).

If some \(F_i\) had a row-dependent line-graph component \(C\), the
induction hypothesis would place \(C\) in a unique affine plane and
would give it at least
\[
 s+1-R_s(C)\ge s+1-d\ge2
\]
full directions on a point set \(U\).  Choose an edge \(g\) in another
root co-component.  Since \(g\) meets all \(s\) lines of a full class
of \(C\), its \(s\) distinct intersection points exhaust \(g\), so
\(g\subseteq U\).  The co-component \(F_i\) cannot have a second
line-graph component, since that component is point-disjoint from \(U\)
while \(g\) must meet all of its edges.  Thus \(F_i=C\).  The complement
of the line graph of a subset of an affine plane is the disjoint union
of its nonempty directions; it is disconnected because \(C\) has at
least two full directions.  This contradicts the definition of \(F_i\)
as a root co-component.  Therefore every \(F_i\) has full row rank.
Lemma~\ref{lem:join-rank} then makes \(F\) full row rank, a
contradiction.  Hence \(z\ge2\).

By \eqref{eq:minimal-balanced},
\[
 a\ge s+1-d\ge2.
\]
The \(a\) minimal balanced co-components form an \(a\)-net
\(\mathcal N\) on a common point set \(U\).  Put
\[
 h=s+1-a\le d.
\]
Every edge outside these classes meets all \(s\) lines of one retained
class.  Its \(s\) distinct intersections exhaust the edge, so the edge
lies in \(U\) and is a transversal of \(\mathcal N\).

The completion hypothesis embeds \(\mathcal N\) in an affine plane
\(\Pi\).  Since \(s>(h-1)^2\), Bruck's uniqueness and transversal
theorem makes \(\Pi\) unique and places every transversal of
\(\mathcal N\) on a line of \(\Pi\).  Thus every edge of \(F\) is a
line of this same affine plane.

For a subset of an affine plane, the connected components of the
complement of the line graph are precisely its nonempty directions.
A balanced direction has at least \(s\) lines by
Lemma~\ref{lem:balanced}; it therefore consists of all \(s\) lines of
that direction.  Hence the \(a=s+1-h\) minimal balanced components are
exactly the full directions of \(F\).  Let \(M\) be the total number
of retained lines in the remaining \(h\) directions.  By
Lemma~\ref{lem:affine-packet},
\[
 R_s(F)=s+1-a+M=h+M.
\]
Since the left side is \(d\), we have \(M=d-h\).

Conversely, an affine-plane subsystem in the statement has
\(a=s+1-h\) full directions and \(M=d-h\) additional rows.
Lemma~\ref{lem:affine-packet} gives \(R_s=h+M=d\).  The independent
case was handled at the beginning.
\end{proof}

\begin{corollary}[Fixed and growing defect]\label{cor:bounded}
Fix \(D\).  If \(s>P(D-1)\), Theorem~\ref{thm:classification}
classifies every connected cograph line system with \(R_s\le D\).
More generally, the classification holds for a defect \(d=d(s)\)
whenever
\[
 3s>8d^3-18d^2+8d+4;
\]
in particular it holds for \(d=o(s^{1/3})\).
\end{corollary}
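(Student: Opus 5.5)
The corollary reduces to checking that the hypotheses of Theorem~\ref{thm:classification}, namely \(\operatorname{Comp}(s,d)\) for each relevant \(d\), hold under the stated numerical conditions. I will use Definition~\ref{def:comp} directly. It has two clauses: completion of every net of order \(s\) and deficiency at most \(d\), and the inequality \(s>(d-1)^2\). For the fixed-\(D\) statement, take \(d\le D\) and \(s>P(D-1)\). Since \(P\) is increasing on \([0,\infty)\), we get \(s>P(h-1)\) for every \(h\le d\) with \(h\ge1\). Bruck's theorem then embeds every net of deficiency \(h\) in an affine plane. Deficiency \(0\) is already a plane, and \(h=1\) is covered as well, since \(P(0)=0<s\). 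For the second clause, \(P(x)\ge x^2\) for \(x\ge0\) gives \(s>(d-1)^2\) when \(d\ge1\), and for \(d=0\) the clause is \(s>1\), which holds. Finally, \(D\le s-1\) follows from \(s>P(D-1)\) for \(D\ge2\), and is trivial for small \(D\) once \(s\ge2\). Hence Theorem~\ref{thm:classification} applies to every connected cograph line system with \(R_s=d\le D\).

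For the growing-defect statement, assume \(3s>8d^3-18d^2+8d+4\). I will show that the same inequality holds for every \(h\le d\), so that Metsch's theorem completes every net of deficiency at most \(d\). The cubic \(g(h)=8h^3-18h^2+8h+4\) is not monotone on small integers: \(g(0)=4\), \(g(1)=2\), \(g(2)=12\). This is the one delicate point, so I will check it by hand. For \(d\ge2\), we have \(g(h)\le g(d)\) for all integers \(0\le h\le d\), because \(g\) is increasing from \(h=1\) on and \(g(0)=4\le g(d)\). For \(d\le1\), the hypothesis \(3s>g(d)\ge2\) together with \(s\ge2\) gives \(3s\ge6>4\).

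Next I compare with \(s>(d-1)^2\). For \(d\ge2\), \(g(d)\ge3(d-1)^2\), since \(8d^3-21d^2+14d+1\ge0\) there; the values at \(d=2,3\) are \(1\) and \(28\), and the expression grows afterwards. Thus \(3s>g(d)\ge3(d-1)^2\) gives \(s>(d-1)^2\). The condition \(d\le s-1\) follows similarly, since \(g(d)\ge3d\) for \(d\ge2\).

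For the asymptotic remark, \(d=o(s^{1/3})\) gives \(8d^3=o(s)\). Hence \(g(d)<3s\) for all large \(s\), and the previous case applies. The only real obstacle is the non-monotonicity of the Metsch cubic at small \(h\), and the direct evaluation above handles it.
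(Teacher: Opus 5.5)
Your proposal is correct and follows the paper's route. The paper justifies this corollary only by the remark after Definition~\ref{def:comp} that \(s>P(d-1)\), or Metsch's inequality, implies \(\operatorname{Comp}(s,d)\); you supply the checks it leaves implicit, namely monotonicity in the deficiency (including the dip \(g(1)<g(0)\) of the Metsch cubic), the clause \(s>(d-1)^2\), and \(d\le s-1\), with one harmless slip: \(8d^3-21d^2+14d+1\) equals \(9\) and \(70\) at \(d=2,3\), not \(1\) and \(28\), though only its positivity for \(d\ge2\) is used.
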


For \(d=1\), the geometric alternative in
Theorem~\ref{thm:classification} is an affine plane with one parallel
class deleted.  This sharpens Theorem~\ref{thm:first-defect} in the
completion range.

\begin{corollary}[Global bounded-defect decomposition]\label{cor:global}
Let \(H\) be a finite linear \(s\)-uniform hypergraph with cograph line
graph.  Suppose
\[
 R_s(H)\le D\le s-1
\]
and \(\operatorname{Comp}(s,D)\) holds.  After isolated vertices are
removed, \(H\) is the vertex-disjoint union of affine planes of order
\(s\), full-row-rank components, and affine-plane subsystems as in
Theorem~\ref{thm:classification}.  The positive component defects sum
to at most \(D\).

Deleting at most \(D\) edges turns \(H\) into a vertex-disjoint union
of finite nets and affine planes.  Alternatively, deleting the
full-row-rank components and completing every geometric component requires
at most \(sD\) added lines.  For one geometric component of defect
\(d\) and parameter \(h\), the exact number of added lines is
\[
 hs-(d-h)=h(s+1)-d.
\]
\end{corollary}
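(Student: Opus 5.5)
The plan is to reduce Corollary~\ref{cor:global} to Theorem~\ref{thm:classification} applied componentwise, using additivity of \(R_s\) at disjoint-union nodes (Lemma~\ref{lem:cotree-toll}). After isolated vertices are removed, the edge-containing components of \(H\) are the connected line-graph components \(F_1,\ldots,F_k\). They have disjoint point supports, so
\[
R_s(H)=\sum_j R_s(F_j),
\]
and each summand is nonnegative by Theorem~\ref{thm:rank}. Hence every \(d_j=R_s(F_j)\) satisfies \(d_j\le D\le s-1\), and the positive \(d_j\) sum to at most \(D\). The hypothesis \(\operatorname{Comp}(s,D)\) implies \(\operatorname{Comp}(s,d_j)\): nets of deficiency at most \(d_j\) have deficiency at most \(D\), and \(s>(D-1)^2\ge(d_j-1)^2\) when \(d_j\ge1\), while \(d_j=0\) is covered by Theorem~\ref{thm:rank}. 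Theorem~\ref{thm:classification} then places each \(F_j\) in one of three classes: an affine plane (\(d_j=0\)), a full-row-rank system with exactly \(d_j\) edges, or an affine-plane subsystem with parameter \(h_j\) and \(d_j-h_j\) extra lines. This gives the structural statement.

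For the edit statement, I will treat the component types separately.
\begin{enumerate}
\item A full-row-rank component has \(d_j\) edges, and I will delete all of them, at cost \(d_j\).
\item In an affine-plane subsystem, I will delete the \(M_j=d_j-h_j\) lines lying in the incomplete directions. What remains is exactly the \(s+1-h_j\) full parallel classes of \(\Pi\), which is an \((s+1-h_j)\)-net of order \(s\), or the plane itself when \(h_j=0\). The cost is \(d_j-h_j\le d_j\).
\end{enumerate}
Summing over components, the total number of deletions is at most \(\sum_j d_j\le D\). The survivors stay vertex-disjoint because they are subsets of the original disjoint components.

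I need one check to make sure the remaining net is nondegenerate, namely that it has at least one full class. Since \(h_j\le d_j\le s-1\), we get \(s+1-h_j\ge2\), so it does.

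For the completion count, take a geometric component with parameters \(d\) and \(h\). It is missing \(h\) directions of \(s\) lines each, and \(d-h\) of those lines are already present. So exactly \(hs-(d-h)=h(s+1)-d\) lines must be added to reach \(\Pi\). Uniqueness of \(\Pi\) in Theorem~\ref{thm:classification} makes this number well defined.

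To bound the total, I will use \(h\le d\) to get \(h(s+1)-d\le d(s+1)-d=sd\). Summing over the geometric components gives at most \(s\sum d_j\le sD\). The full-row-rank components are deleted rather than completed, and the planes need nothing. The completed planes stay vertex-disjoint because each lives on its own point set \(U\) of size \(s^2\), which consists of the component's own nonisolated points.

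I expect the main, though mild, obstacle to be this last claim that completion does not introduce overlaps between components. I would settle it by noting that every line of \(\Pi\) lies inside \(U\), and that \(U\) is covered by any single full class, which is present in the component. So \(U\subseteq V^+(F_j)\), and the point sets of distinct components are disjoint.
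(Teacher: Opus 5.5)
Your proposal is correct and follows the paper's proof. You use additivity of \(R_s\) over line-graph components, apply Theorem~\ref{thm:classification} componentwise, delete the \(d-h\) partial-direction lines (or all \(d\) edges of a full-row-rank component), and bound the completion count by \(h(s+1)-d\le sd\). Your extra checks fill in details the paper leaves implicit: that \(\operatorname{Comp}(s,D)\) implies \(\operatorname{Comp}(s,d_j)\), that the residual net has at least two classes, and that completed planes stay on the disjoint sets \(U\subseteq V^+(F_j)\).
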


\begin{proof}
Distinct line-graph components have disjoint point supports, and
\(R_s\) is additive.  Apply Theorem~\ref{thm:classification}
componentwise.  In a geometric component, delete its \(d-h\) partial
direction lines to leave a net; in a full-row-rank component, delete all
\(d\) edges.  The total number deleted is at most the total positive
defect.

For the completion statement, add every missing line from the \(h\)
directions of a geometric component.  This requires
\(hs-(d-h)=h(s+1)-d\le sd\) lines.  Summing over components gives the
claim.
\end{proof}

\section{Consequences of the classification}\label{sec:consequences}

\begin{corollary}[Column defect and leverage]\label{cor:quantization}
In the geometric alternative of Theorem~\ref{thm:classification}, with
\(d>0\),
\[
 |V(F)|=s^2,
\]
\[
 \operatorname{rank}N(F)
 =1+(s+1-h)(s-1)+(d-h),
\]
and
\[
 C(F)=hs-d.
\]
The extremal slack is
\[
 (s+1)|V(F)|-s|E(F)|
 =s\bigl((s+1)h-d\bigr).
\]
Every partial-direction row has leverage \(1\), while every row in a
full parallel class has leverage
\[
 1-\frac{s-h}{s(s+1-h)}.
\]
\end{corollary}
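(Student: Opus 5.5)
The plan is to read everything off Lemma~\ref{lem:affine-packet}, applied with the parameters supplied by Theorem~\ref{thm:classification}. In the geometric alternative, \(F\subseteq\Pi\) consists of all lines of \(z=s+1-h\) full directions together with \(M=d-h\) lines from the remaining \(h\) directions. Since \(d>0\), we have \(h\ge1\), so \(z\le s\). Also \(h\le d\le s-1\) gives \(z\ge2\). No partial direction can be full, because it is one of the \(h\) remaining directions. Hence the parameter \(z\) in Lemma~\ref{lem:affine-packet} is exactly \(s+1-h\), and the total number of lines is
\[
|E(F)|=s(s+1-h)+(d-h).
\]

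For the vertex count, a single full direction partitions the \(s^2\) points of \(\Pi\). Because \(z\ge1\), every point of \(\Pi\) is nonisolated, which gives \(|V(F)|=|V^+(F)|=s^2\).

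For the rank, Lemma~\ref{lem:affine-packet} gives
\[
\operatorname{rank}N(F)=|E(F)|-(z-1)=s(s+1-h)+(d-h)-(s-h).
\]
I will rewrite this as \(1+(s+1-h)(s-1)+(d-h)\); this only requires checking that \(s(s+1-h)-(s-h)=1+(s+1-h)(s-1)\). The column defect is then
\[
C(F)=s^2-\operatorname{rank}N(F).
\]
Expanding should give \(hs-d\), since \(s^2-1-(s+1-h)(s-1)=h(s-1)\) and subtracting \(d-h\) leaves \(hs-d\). For the extremal slack I will use \eqref{eq:rank-column-split}:
\[
(s+1)|V^+|-s|E|=R_s+(s+1)C=d+(s+1)(hs-d)=s\bigl((s+1)h-d\bigr).
\]
A direct computation from \((s+1)s^2-s|E(F)|\) can serve as a cross-check.

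For the leverages, Lemma~\ref{lem:affine-packet} gives leverage \(1\) on incomplete-class rows, and \(1-(z-1)/(zs)\) on full-class rows. Substituting \(z=s+1-h\) yields \(1-\frac{s-h}{s(s+1-h)}\).

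There is no real obstacle here. The only step needing care is confirming that \(z\) in Lemma~\ref{lem:affine-packet} equals \(s+1-h\), which means checking that no partial direction accidentally has \(s\) lines. That holds because such a direction would then be full, contradicting the count of full directions in Theorem~\ref{thm:classification}. The remaining checks are the arithmetic identities above.
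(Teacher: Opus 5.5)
Your proposal is correct and follows the paper's proof: apply Lemma~\ref{lem:affine-packet} with \(s+1-h\) full directions and \(M=d-h\) partial lines, then get the rank, \(C(F)\), the slack and the leverages by substitution. Your use of \eqref{eq:rank-column-split} for the slack is just one way of doing that substitution. The check that no partial direction is full is quicker than you make it, since the partial directions together contain only \(d-h\le s-1<s\) lines.
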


\begin{proof}
There are \(a=s+1-h\) full directions and \(M=d-h\) partial lines.
Lemma~\ref{lem:affine-packet} gives
\[
 \operatorname{rank}N(F)=as+M-(a-1)
 =1+a(s-1)+M.
\]
The formulas follow by substitution.  The leverage values are the two
cases of Lemma~\ref{lem:affine-packet}.
\end{proof}

For fixed \(d\), the possible column defects of a dependent connected
component are
\[
 s-d,\ 2s-d,\ \ldots,\ ds-d,
\]
subject to realization of the corresponding partial directions in the
unique affine completion.  Rows of leverage one are exactly the
partial-direction rows and therefore identify the edges that must be
deleted to recover the net core.

\begin{corollary}[Affine-plane rank certificate]\label{cor:plane-certificate}
Assume the hypotheses of Theorem~\ref{thm:classification}.  If a
connected component with incidence-rank defect \(d\) is row dependent, then
an affine plane of order \(s\) exists.  Conversely, deleting \(d\)
parallel classes from an affine plane of order \(s\) gives a connected
row-dependent cograph line system of defect \(d\), for
\(0\le d\le s-1\).
\end{corollary}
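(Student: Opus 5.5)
The plan has two directions.

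\emph{Forward direction.} Suppose a connected component $F$ has $R_s(F)=d\le s-1$, $\operatorname{Comp}(s,d)$ holds, and $N(F)$ is row dependent. Theorem~\ref{thm:classification} then excludes the full-row-rank alternative. The only remaining possibility is the affine-plane subsystem, which by definition gives an affine plane $\Pi$ of order $s$ containing $F$. So existence of the plane is read off directly from the classification.

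There is a more intrinsic route inside that proof which I would also point to. Row dependence forces $z\ge2$ at the root join. Inequality~\eqref{eq:minimal-balanced} then gives a net of deficiency $h\le d$. Clause~(1) of $\operatorname{Comp}(s,d)$ embeds this net in an affine plane. For $d=0$ we instead cite Theorem~\ref{thm:rank} directly, since $F$ is then itself an $S(2,s,s^2)$.

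\emph{Converse: construction.} Fix an affine plane $\Pi$ of order $s$ and $0\le d\le s-1$. Delete $d$ parallel classes and let $F$ be the remaining $a=s+1-d\ge2$ full classes. This is an $a$-net, so Theorem~\ref{thm:net-spectrum} gives $R_s(F)=d$. Alternatively, Lemma~\ref{lem:affine-packet} with $z=a$ and $M=as$ gives rank $as-(a-1)$, hence the same defect.

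\emph{Converse: required properties.}
\begin{itemize}
\item Row dependence: the nullity is $a-1\ge1$.
\item Connectivity: any two lines from distinct classes meet, and any two points are joined through lines of two different classes. Since $a\ge2$, both the line graph and the hypergraph are connected.
\item Cograph line graph: the line graph is $K_{s,\dots,s}$ with $a$ parts. A complete multipartite graph has no induced $P_4$, since its complement is a disjoint union of cliques, and the complement of a $P_4$ is again a $P_4$.
\end{itemize}

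\emph{Main obstacle.} The only point needing care is the hypothesis $a\ge2$, i.e.\ $d\le s-1$. It is what guarantees both connectivity and row dependence. For $d=s$, a single parallel class would be disconnected, and its rows would be independent.
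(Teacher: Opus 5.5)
Your proposal is correct and follows the paper's implicit argument (the paper gives no separate proof). The forward direction is read off from the dichotomy in Theorem~\ref{thm:classification}, since row dependence excludes the full-row-rank alternative, and the converse is the case $h=d$, $M=0$ of that theorem's converse, equivalently Theorem~\ref{thm:net-spectrum} with $a=s+1-d$; your explicit checks of connectivity, nullity $a-1\ge1$, and the cograph property of $K_{s,\dots,s}$ fill in details the paper leaves unstated.
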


Consequently, at an order \(s\) for which no affine plane exists,
a connected row-dependent cograph incidence system cannot lie in the
Metsch completion range.  If \(d=R_s(F)\), then necessarily
\[
 3s\le 8d^3-18d^2+8d+4,
\]
so along non-plane orders
\[
 R_s(F)=\Omega(s^{1/3})
\]
for connected row-dependent systems.

\section{Concluding remarks}\label{sec:conclusion}

The incidence-rank defect is integer valued and additive across line-graph
components.  Theorem~\ref{thm:classification} shows that, throughout
the finite-net completion range, every dependent connected component
is obtained from a single affine plane by retaining all lines in most
directions and a bounded number of lines in the remaining directions.
At each join node, the cotree decomposition identifies the complete
directions, while rows of leverage one identify the additional lines.

Beyond the completion range, the affine-plane description requires
additional hypotheses.  A natural problem is to determine whether a connected
cograph line system with small \(R_s\) must contain large net substructures
and whether deleting \(O(R_s)\) rows leaves a disjoint union of nets that are
inclusion-maximal within the original system.  The real-rank argument used
here relies on positive semidefinite Gram matrices and Perron--Frobenius
theory; finite-characteristic incidence rank is a separate problem.


\begin{thebibliography}{99}

\bibitem{AdakVerma2026}
R. Adak and P. Verma,
``Linear Tur\'an numbers of uniform hypertrees,''
arXiv:2607.16854 (2026).

\bibitem{Bruck1963}
R. H. Bruck,
``Finite nets. II. Uniqueness and imbedding,''
\emph{Pacific Journal of Mathematics} 13 (1963), 421--457.

\bibitem{Dembowski1968}
P. Dembowski,
\emph{Finite Geometries}.
Springer-Verlag, Berlin, 1968.

\bibitem{Metsch1991}
K. Metsch,
``Improvement of Bruck's completion theorem,''
\emph{Designs, Codes and Cryptography} 1 (1991), 99--116.

\bibitem{Seinsche}
D. Seinsche,
``On a property of the class of $n$-colorable graphs,''
\emph{Journal of Combinatorial Theory, Series B} 16 (1974), 191--193.

\end{thebibliography}
\end{document}